\documentclass[10pt]{amsart}

\usepackage{amsmath,amssymb}
\usepackage[hidelinks]{hyperref}

\numberwithin{equation}{section}

\newtheorem{theorem}{Theorem}[section]
\newtheorem{proposition}[theorem]{Proposition}

\newtheorem{corollary}[theorem]{Corollary}

\title[Alexander quandle rings]{Group-ring methods for Alexander quandle rings}

\author{Zhi-Lin Zhang}
\address{Independent Researcher, Taipei, Taiwan}
\email{hsa00000@gmail.com}

\subjclass[2020]{Primary 20N02; Secondary 16S34, 17D99}
\keywords{Alexander quandle, medial quandle, quandle ring, group ring,
augmentation filtration, commutator subalgebra, idempotent, Peirce spectrum}

\begin{document}

\begin{abstract}
For an Alexander quandle on an abelian group $G$ with automorphism
$\varphi$, we identify its augmentation powers and nonunital commutator
subalgebra with explicit ideals in the ordinary group ring over any
commutative unital coefficient ring.  Each left-normed augmentation power
is also the span of all products with the same number of factors,
independently of parenthesization.  Over $\mathbb{Z}$, we determine every
successive augmentation quotient for Alexander quandles on nontrivial
finite cyclic groups.  For an even dihedral quandle of order $m\geq4$,
every quotient in degree $r\geq2$ is
$(\mathbb{Z}/(m/2)\mathbb{Z})^2$; this proves the proposed replacement for
the disproved order-$m$ prediction.  For the integral quandle ring of
$\operatorname{Core}(\mathbb{Z})$, we describe all augmentation powers,
determine the nonunital commutator subalgebra, and prove that the difference
of the basis elements indexed by $1$ and $0$ does not lie in the square of
the augmentation ideal.  When $(1-\varphi)G$ is a finite $p$-group, the
integral filtration is separated, so every nonzero integral idempotent has
augmentation one.  Fourier analysis shows that finite odd-order
commutative Alexander quandles have only zero and the basis elements as
integral idempotents.  Combined with the midpoint description of medial
commutative quandles, this gives the same classification for every finite
medial commutative quandle.  Finally, the complex quandle algebra of the
core of any nontrivial finite abelian group of odd order has right Peirce
spectrum $\{0,1,-1\}$, proving the conjectured odd-order dihedral spectrum.
\end{abstract}

\maketitle

\section{Introduction}

Although quandle rings need not be associative, multiplication in an
Alexander quandle ring factors through endomorphisms of an ordinary
commutative group ring.  This factorization converts several filtration,
commutator, and idempotent problems into ordinary ideal theory and Fourier
analysis.

We use right-distributive quandles: each right translation
$R_y(x)=x*y$ is an automorphism fixing $y$; see~\cite[p.~89]{EN15}.
For a quandle $Q$ and a commutative ring $k$ with identity, the quandle ring
$k[Q]$ is the free $k$-module with basis $\{e_x:x\in Q\}$ and bilinear
multiplication $e_xe_y=e_{x*y}$.  The coefficient-sum homomorphism
$\varepsilon_Q(\sum_x c_xe_x)=\sum_x c_x$ has kernel $\Delta$, the
augmentation ideal.  Quandle rings were introduced in~\cite{BPS19};
augmentation powers, idempotents, automorphisms, commutators, and Peirce
spectra have subsequently been studied
in~\cite{BE26,BPS22,CCFK26,ENS23,ST26} and related work.

Section~\ref{sec:model} factors the multiplication for an Alexander
quandle defined by an automorphism $\varphi$ of an abelian group $G$
through two endomorphisms of the ordinary group ring.  The first main
structural result, Theorem~\ref{thm:alexander-powers}, identifies every
left-normed augmentation power with a product of the ordinary augmentation
ideal and powers of the subgroup ideal determined by $(1-\varphi)G$; it
also gives the same submodule for all parenthesized products with the same
number of factors.  Theorem~\ref{thm:commutator-subalgebra} identifies the
nonunital subalgebra generated by all commutators $uv-vu$ with the subgroup
ideal determined by $(1-2\varphi)G$.  Both formulas hold over every
commutative unital coefficient ring.

Theorem~\ref{thm:cyclic-quotients} determines every successive integral
augmentation quotient for Alexander quandles on nontrivial finite cyclic
groups.  Its dihedral specialization recovers the odd-order quotients and the
even-order first quotient of~\cite[Theorems~6.2 and~6.3]{EFT19}.
For even order $m\geq4$, Corollary~\ref{cor:dihedral-quotients} gives
$(\mathbb{Z}/(m/2)\mathbb{Z})^2$ in every degree $r\geq2$, for left-normed
powers and hence also for the spans over all parenthesizations.
This proves~\cite[Conjecture~2]{PP24}, the replacement proposed after
the order-eight counterexample in~\cite[Theorem~1]{PP24} disproved
the order-$m$ prediction of~\cite[Conjecture~6.5(2)]{BPS19}.

For the infinite cyclic core quandle,
Corollary~\ref{cor:infinite-core} describes all augmentation powers and
proves $e_1-e_0\notin\Delta^2$.  Thus it describes the square asked for
in~\cite[Question~3.4]{BE26} and answers the membership subquestion
negatively.  The same corollary determines the commutator subalgebra
and shows that the quotient of $\Delta$ by this subalgebra is
$\mathbb{Z}^2$, giving a negative answer to the separate reverse-inclusion
question in~\cite[Section~3.1]{BE26}.

Theorem~\ref{thm:p-primary-separation} proves that the integral
augmentation filtration is separated whenever $(1-\varphi)G$ is a finite
$p$-group, without requiring $G$ itself to be finite.  Consequently every
nonzero integral idempotent has augmentation one.  This establishes
\cite[Conjecture~6.2]{ENS23} for this class and, by
Corollary~\ref{cor:cyclic-type}, for every finite quandle of cyclic type.

Section~\ref{sec:fourier} treats idempotents and right multiplication
operators in Fourier coordinates.  Theorem~\ref{thm:midpoint-idempotents}
classifies the integral idempotents of finite odd-order commutative
Alexander quandles.  Ta proved that every medial commutative quandle is
isomorphic to a midpoint quandle~\cite[Corollary~4.2]{Ta26}; hence
Corollary~\ref{cor:medial-commutative-idempotents} extends the
classification to every finite medial commutative quandle.  The
cyclic-group specialization answers the integral instance
of~\cite[Question~5.5]{BE26}
and proves the same classification for every odd order, beyond the
question's prime-order range.  Finally, Theorem~\ref{thm:core-peirce}
gives right Peirce spectrum $\{0,1,-1\}$ for the complex quandle algebra
of the core of every nontrivial finite abelian group of odd order.  Its
dihedral specialization, Corollary~\ref{cor:dihedral-peirce},
proves~\cite[Conjecture~7.6]{ENS23}.

\section{Group-ring transport and universal formulas}
\label{sec:model}

We first express the quandle product in the ordinary group ring, then
identify the ideals controlling augmentation products and commutators.

\subsection{The transported multiplication}

Let $k$ be a commutative ring with identity, let $G$ be an abelian group
written additively, and let $\varphi\in\operatorname{Aut}(G)$.  The
Alexander quandle $Q=\operatorname{Alex}(G,\varphi)$ has operation
$x*y=\varphi(x)+(1-\varphi)y$, where $1$ denotes the identity
endomorphism of $G$.  Write $X^g$ for the group-ring basis element indexed
by $g$, so the ordinary product satisfies $X^gX^h=X^{g+h}$.  Let
\[
 I=\ker\bigl(\varepsilon:k[G]\to k\bigr)
\]
be the augmentation ideal of the ordinary group ring.  Define a group-ring
automorphism $\alpha$ and a group-ring endomorphism $\beta$ by
\[
 \alpha(X^g)=X^{\varphi(g)},
 \qquad
 \beta(X^g)=X^{(1-\varphi)g}.
\]
For $u,v\in k[G]$, set $u\star v=\alpha(u)\beta(v)$, where juxtaposition on
the right denotes the ordinary product in $k[G]$.

\begin{proposition}[Group-ring transport]
\label{prop:transported-product}
The $k$-module isomorphism
$\Phi:k[Q]\to k[G]$, $e_g\mapsto X^g$, is an algebra isomorphism from
$k[Q]$ onto $(k[G],\star)$.
\end{proposition}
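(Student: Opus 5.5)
Since $\Phi$ sends the basis $\{e_g:g\in G\}$ of $k[Q]$ bijectively onto the basis $\{X^g:g\in G\}$ of $k[G]$, it is a $k$-module isomorphism. It then suffices to show that $\Phi$ is multiplicative, meaning $\Phi(ab)=\Phi(a)\star\Phi(b)$ for all $a,b\in k[Q]$. The plan is to first note that $\star$ is $k$-bilinear, which reduces the claim to basis elements, and then to check basis elements by a one-line computation.

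For bilinearity, $\alpha$ and $\beta$ are ring endomorphisms of $k[G]$, and in particular they are $k$-linear. They are defined on the basis $X^g$ and extended $k$-linearly. Multiplicativity of these extensions follows from the facts that $\varphi$ and $1-\varphi$ are group endomorphisms of the abelian group $G$, so that $X^gX^h=X^{g+h}\mapsto X^{\varphi(g)+\varphi(h)}$, and similarly for $\beta$. The ordinary product in $k[G]$ is also $k$-bilinear. Hence $(u,v)\mapsto \alpha(u)\beta(v)$ is $k$-bilinear. Since multiplication in $k[Q]$ is bilinear by definition and $\Phi$ is linear, both $(a,b)\mapsto\Phi(ab)$ and $(a,b)\mapsto\Phi(a)\star\Phi(b)$ are bilinear maps $k[Q]\times k[Q]\to k[G]$. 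Bilinear maps agreeing on pairs of basis elements agree everywhere, so we only need to check pairs of basis elements.

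For basis elements $e_g,e_h$ we compute
\[
\Phi(e_g)\star\Phi(e_h)=\alpha(X^g)\beta(X^h)=X^{\varphi(g)}X^{(1-\varphi)h}=X^{\varphi(g)+(1-\varphi)h}=X^{g*h}=\Phi(e_{g*h})=\Phi(e_ge_h).
\]
This proves multiplicativity on basis elements and hence on all of $k[Q]$.

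There is no real obstacle. The only point requiring care is to observe that $\alpha$ and $\beta$ are well-defined ring endomorphisms, which rests on $\varphi$ and $1-\varphi$ being additive. That $\alpha$ is an automorphism, with inverse induced by $\varphi^{-1}$, is not actually needed for this statement.
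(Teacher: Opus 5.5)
Your proof is correct and is the paper's argument: reduce by $k$-bilinearity to basis elements, then check $\Phi(e_ge_h)=X^{\varphi(g)+(1-\varphi)h}=\alpha(X^g)\beta(X^h)$. Only the $k$-linearity of $\alpha$ and $\beta$ is needed for $\star$ to be bilinear; their multiplicativity, though true, plays no role in this statement.
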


\begin{proof}
By $k$-bilinearity, it suffices to observe that for $g,h\in G$,
\[
 \Phi(e_g e_h)
 =X^{\varphi(g)+(1-\varphi)h}
 =\alpha(X^g)\beta(X^h).\qedhere
\]
\end{proof}

For a subgroup $M\leq G$, write
\[
 L_M=\left\langle X^m-1:m\in M\right\rangle_{k[G]}
     =\ker\bigl(k[G]\longrightarrow k[G/M]\bigr).
\]

\subsection{Augmentation products}

Let $\Delta=\ker\varepsilon_Q$ be the augmentation ideal of $k[Q]$.  Since
$(1-\varphi)G$ is $\varphi$-invariant,
\[
 \alpha(I)=I,
 \qquad
 \alpha\bigl(L_{(1-\varphi)G}\bigr)=L_{(1-\varphi)G},
 \qquad
 k[G]\beta(I)=L_{(1-\varphi)G}.
\]

\begin{theorem}[Augmentation transport]
\label{thm:alexander-powers}
For every $n\geq1$,
\[
 \Phi(\Delta^n)=I L_{(1-\varphi)G}^{\,n-1},
\]
where $\Delta^1=\Delta$ and $\Delta^{n+1}=\Delta^n\Delta$.  The
$k$-submodule generated by all $n$-fold products of elements of $\Delta$,
with arbitrary parenthesization, has the same image
$I L_{(1-\varphi)G}^{\,n-1}$.
\end{theorem}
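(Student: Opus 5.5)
Write $J=L_{(1-\varphi)G}$. The plan is to transport everything through $\Phi$ (Proposition~\ref{prop:transported-product}) and work in $(k[G],\star)$. There $\Phi(\Delta)=I$, since $\Phi$ intertwines $\varepsilon_Q$ with $\varepsilon$. For $k$-submodules $A,B\subseteq k[G]$, let $A\star B$ denote the $k$-span of the products $a\star b$. Then
\[
A\star B=\operatorname{span}_k\{\alpha(a)\beta(b)\}.
\]
The whole proof rests on one identity: for any ideal $A$ of $k[G]$ with $\alpha(A)=A$,
\[
A\star I = A J .
\]

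The key lemma is this identity. For ``$\subseteq$'': $\alpha(a)\in A$ and $\beta(b)\in\beta(I)\subseteq J$. For ``$\supseteq$'': since $k[G]\beta(I)=J$, the ideal $AJ$ is spanned by the products $a\,c\,\beta(b)$ with $a\in A$, $c\in k[G]$, $b\in I$. The element $ac$ lies in $A$, so $ac=\alpha(a')$ with $a'=\alpha^{-1}(ac)\in A$. Hence
\[
ac\,\beta(b)=a'\star b\in A\star I .
\]
The only properties used are that $A$ is an ideal and that $\alpha$ is bijective on $A$.

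The left-normed statement then follows by induction on $n$. For $n=1$ it is $\Phi(\Delta)=I$. For the step, set $A=IJ^{n-1}$. This is an ideal, and $\alpha(A)=A$ because $\alpha(I)=I$ and $\alpha(J)=J$ (these facts are stated just before the theorem). The lemma gives
\[
\Phi(\Delta^{n+1})=A\star I=IJ^{n}.
\]

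For arbitrary parenthesizations, let $S_n$ be the span of all $n$-fold products of elements of $I$ under $\star$, over all bracketings. I will prove by strong induction that $S_n=IJ^{n-1}$. The inclusion $S_n\supseteq IJ^{n-1}$ holds because left-normed products are among the bracketings. For $\subseteq$, a product of a $p$-fold and a $q$-fold product with $p+q=n$ lies, by induction, in
\[
\alpha(IJ^{p-1})\,\beta(IJ^{q-1})=IJ^{p-1}\,\beta(I)\beta(J)^{q-1}.
\]
So it suffices to show $\beta(J)\subseteq J$, which gives $IJ^{p-1}J\cdot J^{q-1}=IJ^{n-1}$. Now $J$ is generated as an ideal by the elements $X^m-1$ with $m\in(1-\varphi)G$, and
\[
\beta(X^m-1)=X^{(1-\varphi)m}-1\in J,
\]
since $(1-\varphi)m\in(1-\varphi)G$. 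Because $\beta$ is a ring endomorphism, $\beta(c(X^m-1))=\beta(c)\beta(X^m-1)\in J$, so $\beta(J)\subseteq J$. Note also $\beta(I)\subseteq J$ for the same reason.

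I expect the main obstacle to be the reverse inclusion in the key lemma: the $k$-span of the products $\alpha(a)\beta(b)$ must already be an ideal, even though the $\beta$-images alone are not closed under multiplication by $k[G]$. This is handled by absorbing the extra group-ring factor into the $\alpha$-side through the invertibility of $\alpha$ on $A$. That same absorption is why $\alpha$-stability of $IJ^{n-1}$ must be verified at every step of the induction.
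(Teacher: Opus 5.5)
Your proof is correct and follows essentially the same route as the paper. The induction step uses the same absorption of the extra $k[G]$ factor into the $\alpha$-stable ideal $IJ^{n-1}$, which the paper writes as $IL_{(1-\varphi)G}^{\,n-1}k[G]\beta(I)$. The parenthesization part uses the same two containments, $\beta(I)\subseteq J$ and $\beta(J)\subseteq J$, together with induction on the product tree.
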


\begin{proof}
Since $\Phi$ preserves coefficient sums, $\Phi(\Delta)=I$.  Suppose
\[
 \Phi(\Delta^n)=I L_{(1-\varphi)G}^{\,n-1}.
\]
The right-hand side is an ideal of $k[G]$ fixed by $\alpha$.  Hence
Proposition~\ref{prop:transported-product} gives
\[
 \Phi(\Delta^{n+1})
 =\alpha\bigl(\Phi(\Delta^n)\bigr)\beta(I)
 =I L_{(1-\varphi)G}^{\,n-1}k[G]\beta(I)
 =I L_{(1-\varphi)G}^{\,n}.
\]
This proves the left-normed formula.

For arbitrary parenthesizations, note that
$\beta(I)\subseteq L_{(1-\varphi)G}$ and
\[
 \beta\bigl(L_{(1-\varphi)G}\bigr)\subseteq L_{(1-\varphi)G},
\]
because $(1-\varphi)^2G\subseteq(1-\varphi)G$.  Suppose a parenthesized
product is obtained by multiplying subproducts with $i$ and $j$ factors.
If their images lie in $I L_{(1-\varphi)G}^{\,i-1}$ and
$I L_{(1-\varphi)G}^{\,j-1}$, respectively, then
\[
 \alpha\bigl(I L_{(1-\varphi)G}^{\,i-1}\bigr)
 \beta\bigl(I L_{(1-\varphi)G}^{\,j-1}\bigr)
 \subseteq I L_{(1-\varphi)G}^{\,i+j-1}.
\]
Induction on the product tree gives this containment for every
parenthesization.  The left-normed products are among these products and
already span $I L_{(1-\varphi)G}^{\,n-1}$, so equality follows.
\end{proof}

\subsection{Commutators}

Following~\cite[Section~5]{ST26}, let $k[Q]'$ denote the $k$-subalgebra
generated by all commutators $[u,v]=uv-vu$; no identity is adjoined.

\begin{theorem}[Commutator subalgebra]
\label{thm:commutator-subalgebra}
Under $\Phi:k[Q]\to k[G]$,
\[
 \Phi(k[Q]')=L_{(1-2\varphi)G}.
\]
\end{theorem}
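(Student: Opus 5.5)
The plan is to prove both inclusions directly in $k[G]$ through Proposition~\ref{prop:transported-product}. Write $M=(1-2\varphi)G$. The key computation is the commutator of two basis elements. For $g,h\in G$,
\[
 \Phi([e_g,e_h])=X^{\varphi(g)+(1-\varphi)h}-X^{\varphi(h)+(1-\varphi)g}.
\]
The difference of the two exponents is $(2\varphi-1)(g-h)$. Setting $a=\varphi(h)+(1-\varphi)g$ and $d=g-h$, this gives
\[
 \Phi([e_g,e_h])=X^{a}\bigl(X^{-(1-2\varphi)d}-1\bigr).
\]

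For the inclusion $\Phi(k[Q]')\subseteq L_M$, I would argue in two steps.

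\begin{itemize}
\item The commutator is $k$-bilinear, so every commutator $[u,v]$ is a $k$-combination of the basis commutators above. Each of these lies in $L_M$, because $-(1-2\varphi)d\in M$.
\item $L_M$ is closed under $\star$. Since $\varphi$ commutes with $1-2\varphi$, the subgroup $M$ is $\varphi$-invariant, and hence $\alpha(L_M)=L_M$. Because $L_M$ is an ideal of $k[G]$, this gives $\alpha(L_M)\beta(L_M)\subseteq L_M$.
\end{itemize}

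So $\Phi^{-1}(L_M)$ is a nonunital subalgebra of $k[Q]$ containing all commutators. It therefore contains $k[Q]'$.

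For the reverse inclusion, it suffices to show that every $k$-spanning element of $L_M$ is the image of a single commutator. As a $k$-module, $L_M$ is spanned by the elements $X^{a}(X^{m}-1)$ with $a\in G$ and $m\in M$, since $k[G]$ is spanned by the $X^a$. Write $m=-(1-2\varphi)d$ with $d\in G$, and solve $g-h=d$, $\varphi(h)+(1-\varphi)g=a$. Substituting $h=g-d$ gives $g-\varphi(d)=a$, so we take $g=a+\varphi(d)$ and $h=g-d$. Then $\Phi([e_g,e_h])=X^a(X^m-1)$, so these spanning elements all lie in $\Phi(k[Q]')$, and equality follows.

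I expect no serious obstacle. The only points needing care are the exponent bookkeeping in the commutator (signs, and the fact that $\varphi$ is applied to $h$ rather than to $g-h$) and the observation that products are not even needed for the reverse inclusion. Invertibility of $\varphi$ is not used; only its commuting with $1-2\varphi$ matters.
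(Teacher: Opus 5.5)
Your proof is correct and is essentially the paper's argument. The paper also writes each basis commutator as a monomial times $X^{m}-1$ with $m\in M$; it parametrizes $y=x+z$ and notes that $x+\varphi z$ ranges over $G$, where you instead solve for $g,h$ explicitly. It then concludes that the $k$-span of commutators is exactly $L_M$ and notes that this ideal is closed under $\star$.

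One minor point: your remark that invertibility is not used holds only if you replace $\alpha(L_M)=L_M$ by $\alpha(L_M)\subseteq L_M$. That inclusion is all your closure argument needs.
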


\begin{proof}
Set $M=(1-2\varphi)G$.  Writing $y=x+z$ and applying
Proposition~\ref{prop:transported-product}, we obtain
\[
 \Phi([e_x,e_{x+z}])
 =X^{x+\varphi z}\bigl(X^{(1-2\varphi)z}-1\bigr).
\]
For fixed $z$, the exponent $x+\varphi z$ ranges over $G$.  Hence the
$k$-linear span of the basis commutators is
\[
 \left\langle
 X^g\bigl(X^{(1-2\varphi)z}-1\bigr):g,z\in G
 \right\rangle_k
 =L_M.
\]
By bilinearity, this is the span of all commutators.

Since $\varphi(M)=M$ and $(1-\varphi)M\subseteq M$, one has
$\alpha(L_M)=L_M$ and $\beta(L_M)\subseteq L_M$.  As $L_M$ is an ideal of
the ordinary group ring, it is closed under $\star$.  Therefore the span of
the commutators is already a subalgebra, and it equals $\Phi(k[Q]')$.
\end{proof}

\section{Augmentation quotients over cyclic groups}
\label{sec:quotients}

The augmentation formula reduces the case of a finite cyclic underlying
group to quotients of ideals in integral group rings.  For the core quandle
of the infinite cyclic group, the same method gives principal ideals in a
Laurent polynomial ring.

Let $m\geq2$ and let $a\in\mathbb{Z}$ satisfy $\gcd(a,m)=1$.
Write $\mathbb{Z}_m=\mathbb{Z}/m\mathbb{Z}$ and set
\[
 Q_{m,a}=\operatorname{Alex}(\mathbb{Z}_m,a),
 \qquad
 d=\gcd(m,1-a),
 \qquad
 q=m/d.
\]
Write $\Delta_{m,a}$ for the augmentation ideal of $\mathbb{Z}[Q_{m,a}]$.

\begin{theorem}[Augmentation quotients over cyclic groups]
\label{thm:cyclic-quotients}
There are isomorphisms of abelian groups
\[
 \frac{\Delta_{m,a}}{\Delta_{m,a}^2}
 \cong \mathbb{Z}^{d-1}\oplus\mathbb{Z}/q\mathbb{Z}
\]
and, for every $r\geq2$,
\[
 \frac{\Delta_{m,a}^r}{\Delta_{m,a}^{r+1}}
 \cong (\mathbb{Z}/q\mathbb{Z})^d.
\]
\end{theorem}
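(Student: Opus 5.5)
By Theorem~\ref{thm:alexander-powers} with $k=\mathbb{Z}$, the map $\Phi$ carries $\Delta_{m,a}^r$ onto $I L^{r-1}$, where $G=\mathbb{Z}_m$, $I$ is the augmentation ideal of $R=\mathbb{Z}[\mathbb{Z}_m]=\mathbb{Z}[X]/(X^m-1)$, and $L=L_H$ with $H=(1-a)\mathbb{Z}_m=d\mathbb{Z}_m$. The plan is therefore to compute the quotients $IL^{r-1}/IL^r$ inside the commutative ring $R$.

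\textbf{Structure of the ideals.} $H$ is cyclic of order $q$, generated by $d$, so $L=(X^d-1)R$. The group $I$ is generated as an ideal by $X-1$. Since $X^d-1\in I$, we get
\[
IL^{r-1}=(X-1)(X^d-1)^{r-1}R \quad (r\geq1).
\]
Next decompose $R$ as a free module over the subring $S=\mathbb{Z}[H]\cong\mathbb{Z}[Y]/(Y^q-1)$, where $Y=X^d$. A basis is $1,X,\dots,X^{d-1}$, so $R\cong\bigoplus_{j<d}X^jS$. Let $J=(Y-1)S$ be the augmentation ideal of $S$. Then $L^s=\bigoplus_j X^jJ^s$.

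\textbf{Degree $r\geq2$.} Here $IL^{r-1}=(X-1)L^{r-1}$. I would show the following, for $s\geq1$:
\[
(X-1)L^{s}=L^{s+1}+(X-1)L^s,
\]
which reduces to $(X-1)L^{s}\supseteq L^{s+1}$. This holds because $Y-1=(X-1)(1+X+\dots+X^{d-1})$, so $(Y-1)^{s+1}=(X-1)\,c\,(Y-1)^s$. The key claim is that multiplication by $X-1$ induces an isomorphism
\[
L^s/L^{s+1}\;\to\;(X-1)L^s/(X-1)L^{s+1}.
\]
Surjectivity is clear. For injectivity, I need that $(X-1)$ is not a zero divisor on $L^s$ modulo the correct ideal. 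I expect this to be the main obstacle, since $X-1$ kills the norm element.

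My approach would be to show $\operatorname{Ann}_R(X-1)\cap L=0$. The annihilator is $\mathbb{Z}N$, with $N=\sum_g X^g$, and $N$ has augmentation $m\neq0$, so $N\notin I\supseteq L$. Then I would compare ranks: all these groups are finite of the same size because the map $L\to(X-1)L$ is an isomorphism. Given $L^{s}\supseteq(X-1)L^{s}\supseteq L^{s+1}$, the filtration $(X-1)L^s$ interleaves with $L^s$, and so
\[
(X-1)L^{s-1}/(X-1)L^{s}\cong L^{s-1}/L^{s} \quad (s\geq2).
\]
The right side is $\bigoplus_{j<d}J^{s-1}/J^{s}\cong(J^{s-1}/J^s)^d$. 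By the classical computation for cyclic group rings, this equals $(\mathbb{Z}/q)^d$ whenever $s-1\geq1$; it uses $J^t/J^{t+1}\cong\mathbb{Z}/q$ for $t\geq1$, e.g. via the explicit basis $(Y-1)^t,\dots$ with $q(Y-1)^t\in J^{t+1}$. Taking $s=r\geq2$ yields the second isomorphism.

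\textbf{Degree $1$.} Here the quotient is $I/(X-1)L$, which has index computed by the chain $I\supseteq L\supseteq(X-1)L$. First, $I/L$ is the augmentation ideal of $\mathbb{Z}[\mathbb{Z}_d]$, which is free of rank $d-1$. Second, $L/(X-1)L\cong L/L^2\cong(\mathbb{Z}/q)^d$ by the isomorphism above with $s=1$; this is not yet the answer, so the degree-one case needs separate care. I would instead compute $I/(X-1)L$ directly. It is generated by the classes of $X^j-1$. Modulo $(X-1)L$, the relation $(X-1)(Y-1)\equiv0$ says that $X$ acts trivially on $L/(X-1)L$. The quotient $L/(X-1)L$ is then cyclic, generated by $Y-1$, of order $q$, since $q(Y-1)\equiv\binom{q}{2}(Y-1)^2+\dots$ vanishes through the relation $N$-type arguments. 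This leaves the extension $0\to\mathbb{Z}/q\to I/(X-1)L\to\mathbb{Z}^{d-1}\to0$, which splits because the quotient is free. That gives $\mathbb{Z}^{d-1}\oplus\mathbb{Z}/q$.
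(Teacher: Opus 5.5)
Your route differs from the paper's. The paper inserts $\Phi(\Delta_{m,a}^r)=I\mathfrak a^{r-1}$ into Hartl's decomposition, which holds for an arbitrary subgroup. That decomposition gives, noncanonically, $\mathfrak a^r/\mathfrak a^{r+1}\oplus(I/L)\otimes(\mathfrak a^{r-1}/\mathfrak a^r)$ in every degree at once; here $\mathfrak a$ is your $J$. You instead use that the group is cyclic: $I=(X-1)R$, and $X-1$ is a nonzerodivisor on $I$ because its annihilator $\mathbb{Z}N$ meets $I$ trivially.

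For $r\geq2$ your argument is correct. It even gives a canonical isomorphism $\Delta_{m,a}^r/\Delta_{m,a}^{r+1}\cong L^{r-1}/L^r\cong(J^{r-1}/J^r)^d$. This is more elementary than citing Hartl, but it only works for cyclic $G$. Both routes still rely on the classical fact $J^t/J^{t+1}\cong\mathbb{Z}/q\mathbb{Z}$. Your comparison of sizes is unnecessary, and $L$ is not finite anyway. Injectivity of multiplication by $X-1$ on $L^s\subseteq I$ already makes $L^s\to(X-1)L^s$ a bijection carrying $L^{s+1}$ onto $(X-1)L^{s+1}$.

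The degree-one case has two defects.

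\textbf{First defect.} The claim $L/(X-1)L\cong L/L^2$ is false. The $s=1$ isomorphism identifies $L/L^2$ with $(X-1)L/(X-1)L^2$, not with $L/(X-1)L$. Since $L^2\subseteq(X-1)L\subseteq L$, the group $L/(X-1)L$ is in general a proper quotient of $L/L^2$. Your own later finding that $L/(X-1)L$ is cyclic contradicts $(\mathbb{Z}/q\mathbb{Z})^d$ whenever $d,q>1$.

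\textbf{Second defect.} The relation $q(Y-1)\in L^2\subseteq(X-1)L$ only shows that the class of $Y-1$ has order dividing $q$. Nothing you wrote excludes a smaller order, so as written you only get $\mathbb{Z}^{d-1}\oplus\mathbb{Z}/q'\mathbb{Z}$ with $q'\mid q$. Two ways to get the lower bound:
\begin{itemize}
\item The map $\sum_{j<d}X^jf_j\mapsto\sum_jf_j \bmod J^2$ is a homomorphism $L\to J/J^2\cong\mathbb{Z}/q\mathbb{Z}$ that kills $(X-1)L$. For $j<d-1$, $(X-1)X^jf=X^{j+1}f-X^jf$ maps to $0$. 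For $j=d-1$, $(X-1)X^{d-1}f=Yf-X^{d-1}f$ maps to $(Y-1)f\in J^2$. The map sends $Y-1$ to a generator, so the order is exactly $q$.
\item More simply, run your nonzerodivisor argument at $s=0$. Multiplication by $X-1$ maps $R$ onto $I$ with kernel $\mathbb{Z}N$, so $I/(X-1)L\cong R/(L+\mathbb{Z}N)\cong\mathbb{Z}[\mathbb{Z}_d]/\mathbb{Z}\,qN_d$, where $N_d$ is the norm element of $\mathbb{Z}[\mathbb{Z}_d]$. Since $N_d,X,\dots,X^{d-1}$ is a $\mathbb{Z}$-basis, this is $\mathbb{Z}^{d-1}\oplus\mathbb{Z}/q\mathbb{Z}$ directly, with no extension to split.
\end{itemize}

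Your parenthetical justification for $J^t/J^{t+1}$ likewise gives only an upper bound on the order. There, however, you rightly defer to the classical result, as the paper does.
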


\begin{proof}
Put $C=\mathbb{Z}_m$ and $H=(1-a)C=dC\cong\mathbb{Z}_q$, and let $I$ and
$\mathfrak a$ be the augmentation ideals of $\mathbb{Z}[C]$ and
$\mathbb{Z}[H]$, respectively.  We use
$\mathfrak a^0=\mathbb{Z}[H]$.  Since
\[
L_H=\mathbb{Z}[C]\mathfrak a
\qquad\text{and hence}\qquad
L_H^{\,r-1}=\mathbb{Z}[C]\mathfrak a^{\,r-1},
\]
Theorem~\ref{thm:alexander-powers} gives
\[
\Phi(\Delta_{m,a}^r)=I\mathfrak a^{\,r-1}.
\]

Apply Hartl's decomposition
\cite[Theorem~1.5]{Hartl10} with $G=C$, subgroup $H$, coefficient ring
$\mathbb{Z}$, and left ideal
$J=\mathfrak a^{\,r-1}\subseteq\mathbb{Z}[H]$.  The theorem applies to
arbitrary subgroups and left ideals over a commutative coefficient ring,
and gives a noncanonical isomorphism
\[
\frac{I\mathfrak a^{\,r-1}}{I\mathfrak a^r}
\cong
\frac{\mathfrak a^r}{\mathfrak a^{r+1}}
\oplus
\left(\frac{I}{\mathbb{Z}[C]\mathfrak a}\right)
\otimes_{\mathbb{Z}}
\left(\frac{\mathfrak a^{\,r-1}}{\mathfrak a^r}\right).
\]

The quotient map $C\to C/H\cong\mathbb{Z}_d$ induces
\[
\frac{I}{\mathbb{Z}[C]\mathfrak a}
\cong
\ker\bigl(\mathbb{Z}[\mathbb{Z}_d]\to\mathbb{Z}\bigr),
\]
which is free abelian of rank $d-1$. Also,
\[
\mathbb{Z}[H]/\mathfrak a\cong\mathbb{Z}.
\]

If $q=1$, then $H$ is trivial and $\mathfrak a=0$, so
\[
\frac{\mathfrak a^s}{\mathfrak a^{s+1}}
=0
=\mathbb{Z}/q\mathbb{Z}
\qquad(s\geq1).
\]
If $q>1$, specializing \cite[Theorem~3.20]{CT11} to cyclic
$p$-groups and then using the Sylow decomposition \cite[(4.25)]{CT11}
gives
\[
\frac{\mathfrak a^s}{\mathfrak a^{s+1}}
\cong
\mathbb{Z}/q\mathbb{Z}
\qquad(s\geq1).
\]
Thus this formula holds for every $q\geq1$.

For $r=1$, Hartl's decomposition therefore gives
\[
\frac{\Delta_{m,a}}{\Delta_{m,a}^2}
\cong
\mathbb{Z}^{d-1}\oplus\mathbb{Z}/q\mathbb{Z}.
\]
For $r\geq2$, it gives
\[
\frac{\Delta_{m,a}^r}{\Delta_{m,a}^{r+1}}
\cong
\mathbb{Z}/q\mathbb{Z}
\oplus
\left(\mathbb{Z}^{d-1}\otimes_{\mathbb{Z}}\mathbb{Z}/q\mathbb{Z}\right)
\cong
(\mathbb{Z}/q\mathbb{Z})^d.
\]
\end{proof}

If $a\equiv1\pmod m$, then $d=m$, $q=1$, and
$L_{(1-a)\mathbb{Z}_m}=0$.  Thus
$\Delta_{m,a}\cong\mathbb{Z}^{m-1}$ and $\Delta_{m,a}^2=0$, in agreement
with the trivial-quandle multiplication.

\subsection{Dihedral quandles}

For an abelian group $G$, the core quandle
$\operatorname{Core}(G)=\operatorname{Alex}(G,-1)$ has operation
$x*y=2y-x$.  In particular,
$\operatorname{Dih}_m=\operatorname{Core}(\mathbb{Z}_m)$ is the dihedral
quandle of order $m$.

\begin{corollary}
\label{cor:dihedral-quotients}
Let $m\geq2$ and let $\Delta$ be the augmentation ideal of
$\mathbb{Z}[\operatorname{Dih}_m]$.  If $m$ is odd, then
\[
 \frac{\Delta^r}{\Delta^{r+1}}\cong\mathbb{Z}/m\mathbb{Z}
 \qquad(r\geq1).
\]
If $m$ is even, then
\[
 \frac{\Delta}{\Delta^2}
 \cong\mathbb{Z}\oplus\mathbb{Z}/(m/2)\mathbb{Z}
\]
and
\[
 \frac{\Delta^r}{\Delta^{r+1}}
 \cong\bigl(\mathbb{Z}/(m/2)\mathbb{Z}\bigr)^2
 \qquad(r\geq2).
\]
\end{corollary}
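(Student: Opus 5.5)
This is a direct specialization of Theorem~\ref{thm:cyclic-quotients} with $a=-1$, since $\operatorname{Dih}_m=\operatorname{Core}(\mathbb{Z}_m)=\operatorname{Alex}(\mathbb{Z}_m,-1)=Q_{m,-1}$. The hypothesis $\gcd(-1,m)=1$ holds trivially. So the plan is to compute the two invariants
\[
 d=\gcd(m,1-a)=\gcd(m,2),
 \qquad
 q=m/d,
\]
and substitute them into the two isomorphisms of Theorem~\ref{thm:cyclic-quotients}.

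\emph{Case $m$ odd.} Here $d=1$ and $q=m$. The first formula gives $\Delta/\Delta^2\cong\mathbb{Z}^{0}\oplus\mathbb{Z}/m\mathbb{Z}=\mathbb{Z}/m\mathbb{Z}$. For $r\geq2$, the second formula gives $(\mathbb{Z}/m\mathbb{Z})^1$. Together these yield $\mathbb{Z}/m\mathbb{Z}$ for every $r\geq1$.

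\emph{Case $m$ even.} Here $d=2$ and $q=m/2$. The first formula gives $\Delta/\Delta^2\cong\mathbb{Z}\oplus\mathbb{Z}/(m/2)\mathbb{Z}$, and for $r\geq2$ the second gives $(\mathbb{Z}/(m/2)\mathbb{Z})^2$.

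The case $m=2$ needs a check. There $q=1$, all the torsion groups vanish, and indeed $\operatorname{Dih}_2$ is the trivial quandle. This agrees with the remark following Theorem~\ref{thm:cyclic-quotients}, since $a=-1\equiv1\pmod 2$. The stated formulas then read $\Delta/\Delta^2\cong\mathbb{Z}$ and $\Delta^r/\Delta^{r+1}=0$, which are consistent.

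There is no real obstacle, since all the work was done in Theorem~\ref{thm:cyclic-quotients}. The only points to verify are that $\gcd(m,2)$ correctly distinguishes the two parities and that the degenerate case $m=2$ is covered, which it is because Theorem~\ref{thm:cyclic-quotients} allows $q=1$.
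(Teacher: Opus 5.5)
Your proposal is correct and follows the paper's proof: specialize Theorem~\ref{thm:cyclic-quotients} to $a=-1$, so that $d=\gcd(m,2)$ and $q=m/d$, then read off the two parity cases. Your separate check of the degenerate case $m=2$, where $q=1$, is a harmless extra; it is already covered because that theorem allows $q=1$.
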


\begin{proof}
Apply Theorem~\ref{thm:cyclic-quotients} with $a=-1$, so
$d=\gcd(m,2)$ and $q=m/d$.
\end{proof}

The odd-order quotients and the even-order first quotient recover
\cite[Theorems~6.2 and~6.3]{EFT19}.  For $m=8$, the higher even-order
formula gives
$|\Delta^2/\Delta^3|=16$, in agreement with
\cite[Theorem~1]{PP24}.  More generally, putting $m=2n$ with $n\geq2$
proves the replacement conjecture~\cite[Conjecture~2]{PP24}:
every quotient in degree $r\geq2$ is
$(\mathbb{Z}/n\mathbb{Z})^2$.  Its order is $m^2/4$, replacing the
already-disproved order-$m$ prediction
in~\cite[Conjecture~6.5(2)]{BPS19}.  This comparison uses left-normed
powers, as in the counterexample computation; by
Theorem~\ref{thm:alexander-powers}, the spans over all parenthesizations
give the same quotients.

\subsection{The infinite core quandle}

\begin{corollary}
\label{cor:infinite-core}
Let $\Delta$ be the augmentation ideal of
$\mathbb{Z}[\operatorname{Core}(\mathbb{Z})]$.  Identify the underlying
$\mathbb{Z}$-module of this quandle ring with $\mathbb{Z}[t,t^{-1}]$
via $\Phi(e_j)=t^j$.  Then
\[
 \Phi(\Delta^n)
 =(t-1)(t^2-1)^{n-1}\mathbb{Z}[t,t^{-1}]
 \qquad(n\geq1)
\]
and
\[
 \Phi\bigl(\mathbb{Z}[\operatorname{Core}(\mathbb{Z})]'\bigr)
 =(t^3-1)\mathbb{Z}[t,t^{-1}].
\]
Moreover, as abelian groups,
\[
 \Delta/\mathbb{Z}[\operatorname{Core}(\mathbb{Z})]'\cong\mathbb{Z}^2,
\]
and $e_1-e_0\notin\Delta^2$.
\end{corollary}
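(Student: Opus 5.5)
The plan is to specialize Theorems~\ref{thm:alexander-powers} and~\ref{thm:commutator-subalgebra} to $G=\mathbb{Z}$ and $\varphi=-1$, with $k=\mathbb{Z}$, and then compute in the Laurent polynomial ring $R=\mathbb{Z}[t,t^{-1}]=\mathbb{Z}[\mathbb{Z}]$.

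\textbf{Identifying the ideals.} The augmentation ideal is $I=(t-1)R$. Here $(1-\varphi)G=2\mathbb{Z}$. The ideal $L_{2\mathbb{Z}}$ is generated by the elements $t^{2j}-1$, and each of these is divisible by $t^2-1$, so $L_{2\mathbb{Z}}=(t^2-1)R$. Theorem~\ref{thm:alexander-powers} then gives
\[
 \Phi(\Delta^n)=(t-1)(t^2-1)^{n-1}R .
\]
Similarly $(1-2\varphi)G=3\mathbb{Z}$, so $L_{3\mathbb{Z}}=(t^3-1)R$, and Theorem~\ref{thm:commutator-subalgebra} gives $\Phi(\mathbb{Z}[\operatorname{Core}(\mathbb{Z})]')=(t^3-1)R$.

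\textbf{The quotient $\Delta$ by the commutator subalgebra.} Write $t^3-1=(t-1)(t^2+t+1)$. Multiplication by $t-1$ is injective on the domain $R$, so
\[
 \Delta/\mathbb{Z}[\operatorname{Core}(\mathbb{Z})]'\cong (t-1)R/(t-1)(t^2+t+1)R\cong R/(t^2+t+1)R .
\]
The polynomial $t^2+t+1$ is monic and has unit constant term. Inverting $t$ therefore changes nothing: $t$ is already a unit modulo this polynomial, since $t(-t-1)=1$. Hence $R/(t^2+t+1)\cong\mathbb{Z}[t]/(t^2+t+1)$, which is free abelian with basis $1,t$, i.e.\ isomorphic to $\mathbb{Z}^2$.

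\textbf{The non-membership $e_1-e_0\notin\Delta^2$.} Note that $\Phi(e_1-e_0)=t-1$. If $t-1$ belonged to $(t-1)(t^2-1)R$, cancelling $t-1$ in the domain $R$ would make $t^2-1$ a unit in $R$. That is impossible. One way to see it: evaluating at $t=1$ gives a ring map $R\to\mathbb{Z}$ sending $t^2-1$ to $0$, and a unit cannot map to $0$. Equivalently, the units of $R$ are only $\pm t^j$, and $t^2-1$ is not of this form.

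\textbf{Main obstacle.} The only step needing any care is the identification of $L_{2\mathbb{Z}}$ and $L_{3\mathbb{Z}}$ as principal ideals, together with the passage from Laurent to ordinary polynomials in computing the rank $2$. Both reduce to the factorization $t^{nj}-1=(t^n-1)(\cdots)$, which also covers negative $j$ after multiplying by a unit, and to the fact that $t^2+t+1$ is monic with unit constant term. Everything else is a direct specialization of the earlier theorems.
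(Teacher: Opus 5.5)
Your proposal is correct and follows the paper's proof essentially step for step: you specialize the two theorems, cancel the factor $t-1$ to reduce the quotient to $\mathbb{Z}[t,t^{-1}]/(t^2+t+1)\cong\mathbb{Z}^2$, and rule out $e_1-e_0\in\Delta^2$ by cancellation followed by evaluation at $t=1$. Your extra justifications, namely that $L_{2\mathbb{Z}}$ and $L_{3\mathbb{Z}}$ are principal and that $t$ is already invertible modulo $t^2+t+1$, supply details that the paper leaves implicit.
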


\begin{proof}
The two ideal descriptions are the specializations of
Theorems~\ref{thm:alexander-powers} and~\ref{thm:commutator-subalgebra} to
$\varphi=-1$.  Since $t^3-1=(t-1)(t^2+t+1)$, one has
$\mathbb{Z}[\operatorname{Core}(\mathbb{Z})]'\subseteq\Delta$.
Multiplication by $t-1$ identifies
$\mathbb{Z}[t,t^{-1}]$ with $(t-1)\mathbb{Z}[t,t^{-1}]$, and under this
identification
$\Phi\bigl(\mathbb{Z}[\operatorname{Core}(\mathbb{Z})]'\bigr)$ corresponds
to $(t^2+t+1)$.  Hence
\[
 \Delta/\mathbb{Z}[\operatorname{Core}(\mathbb{Z})]'
 \cong\mathbb{Z}[t,t^{-1}]/(t^2+t+1)
 \cong\mathbb{Z}^2,
\]
the last group having basis represented by $1$ and $t$.  In particular,
the commutator subalgebra is strictly smaller than $\Delta$.

Finally, $\Phi(e_1-e_0)=t-1$.  If $e_1-e_0\in\Delta^2$, then
$t-1=(t-1)(t^2-1)f$ for some $f\in\mathbb{Z}[t,t^{-1}]$.  Cancellation in
the Laurent polynomial domain would give $1=(t^2-1)f$, which is impossible
after evaluating at $t=1$.
\end{proof}

The formula at $n=2$ describes the square asked for
in~\cite[Question~3.4]{BE26}, and $e_1-e_0\notin\Delta^2$ gives a negative
answer to its membership subquestion.  The strict commutator inclusion
separately gives a negative answer to the reverse-inclusion question
in~\cite[Section~3.1]{BE26}.

\section{Separated integral filtrations and idempotents}
\label{sec:separation}

Separation of a subgroup ideal forces separation of the corresponding
augmentation filtration.  This rules out nonzero idempotents of
augmentation zero.  We include zero among idempotents unless explicitly
excluded.

\begin{theorem}[$p$-primary separation]
\label{thm:p-primary-separation}
Let $G$ be an abelian group, let $\varphi\in\operatorname{Aut}(G)$, and
suppose that $(1-\varphi)G$ is a finite $p$-group. If $\Delta$ is the
augmentation ideal of $\mathbb{Z}[\operatorname{Alex}(G,\varphi)]$, then
\[
 \bigcap_{n\geq1}\Delta^n=0.
\]
Consequently, every nonzero idempotent of
$\mathbb{Z}[\operatorname{Alex}(G,\varphi)]$ has augmentation one.
\end{theorem}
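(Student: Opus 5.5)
By Theorem~\ref{thm:alexander-powers}, $\Phi(\Delta^n)=I L_H^{\,n-1}\subseteq L_H^{\,n-1}$, where $H=(1-\varphi)G$ is a finite $p$-group. So it suffices to prove $\bigcap_n L_H^{\,n}=0$ in $\mathbb{Z}[G]$.

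\textbf{Reduction to $\mathbb{Z}[H]$.} Let $\mathfrak a$ be the augmentation ideal of $\mathbb{Z}[H]$. Then $L_H^{\,n}=\mathbb{Z}[G]\mathfrak a^n$. Choose a transversal $T$ of $H$ in $G$. Then $\mathbb{Z}[G]=\bigoplus_{t\in T}X^t\mathbb{Z}[H]$ as right $\mathbb{Z}[H]$-modules, so
\[
L_H^{\,n}=\bigoplus_{t\in T}X^t\mathfrak a^n .
\]
Hence $\bigcap_n L_H^{\,n}=\bigoplus_t X^t\bigcap_n\mathfrak a^n$, because membership is checked componentwise, and each element has finitely many nonzero components. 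The question therefore reduces to the finite group ring: show $\bigcap_n\mathfrak a^n=0$ for a finite $p$-group $H$. This is the classical residual nilpotence of the augmentation ideal of $\mathbb{Z}[H]$ (Sandling, Passi); I would cite it.

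\textbf{Direct argument for the abelian case.} If a self-contained proof is wanted, it is the main obstacle, and I would argue as follows. Since $p^k=|H|$ kills $\mathfrak a^j/\mathfrak a^{j+1}$ for $j\ge1$ (this is the torsion part of the standard computation, e.g.\ from \cite{CT11}), and since $\mathbb{F}_p[H]$ is local with nilpotent augmentation ideal, one gets $\mathfrak a^{N}\subseteq p\mathfrak a$ for some $N$. Iterating gives $\mathfrak a^{sN}\subseteq p^s\mathfrak a$. Thus $\bigcap_n\mathfrak a^n\subseteq\bigcap_s p^s\mathbb{Z}[H]=0$, since $\mathbb{Z}[H]$ is finitely generated free abelian. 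The delicate point is the inclusion $\mathfrak a^N\subseteq p\mathfrak a$ rather than merely $\mathfrak a^N\subseteq p\mathbb{Z}[H]$. I would handle it via $(X^h-1)^{p^k}\equiv X^{p^kh}-1=0 \pmod{p}$, together with a finite generation count for the ideal $\mathfrak a^{N'}$ modulo $p\mathbb{Z}[H]\cap\mathfrak a = p\mathfrak a$. The last equality holds because $\mathfrak a$ is a direct summand of $\mathbb{Z}[H]$ as an abelian group.

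\textbf{Idempotents.} Let $e$ be an idempotent, and note that $\varepsilon_Q$ is a ring homomorphism to $\mathbb{Z}$, where $\mathbb{Z}$ carries its usual product since $\varepsilon(e_xe_y)=1$. So $\varepsilon_Q(e)\in\{0,1\}$. If $\varepsilon_Q(e)=0$, then $e\in\Delta$ and, left-normed, $e=e^2=ee^2=\cdots\in\Delta^n$ for all $n$. Hence $e\in\bigcap_n\Delta^n=0$. Therefore every nonzero idempotent has augmentation one.
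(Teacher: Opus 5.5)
Your proof is correct and follows the paper's route. You reduce via Theorem~\ref{thm:alexander-powers} to $\bigcap_n L_H^{\,n}=0$, split $\mathbb{Z}[G]$ along a transversal of $H$, use nilpotence of the augmentation ideal of $\mathbb{F}_p[H]$ to get $\mathfrak a^N\subseteq p\mathbb{Z}[H]$, and dispose of augmentation-zero idempotents by $e=e\cdot e\in\Delta^{n+1}$.

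The ``delicate point'' you flag is not needed. The weaker inclusion already suffices, since $\mathfrak a^{Nr}=(\mathfrak a^N)^r\subseteq(p\mathbb{Z}[H])^r=p^r\mathbb{Z}[H]$, and this is exactly how the paper proceeds. In any case $\mathfrak a^N\subseteq p\mathbb{Z}[H]\cap\mathfrak a=p\mathfrak a$ follows at once, so neither the extra generation count nor the torsion fact about $\mathfrak a^j/\mathfrak a^{j+1}$ is required.
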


\begin{proof}
Put $H=(1-\varphi)G$ and let $\mathfrak a_H$ be the augmentation ideal of
$\mathbb{Z}[H]$.  We first recall the finite-$p$-group argument
in~\cite[proof of Lemma~3.2]{Emmanouil25}, with coefficient ring
$\mathbb{Z}$.  The augmentation ideal of $\mathbb{F}_p[H]$ has finitely
many commuting generators $X^h-1$, each satisfying
$(X^h-1)^{|H|}=0$, so it is nilpotent.  Hence for some $N\geq1$,
$\mathfrak a_H^N\subseteq p\mathbb{Z}[H]$, and therefore
$\mathfrak a_H^{Nr}\subseteq p^r\mathbb{Z}[H]$ for every $r\geq1$.
Since $\mathbb{Z}[H]$ is free abelian,
\[
 \bigcap_{n\geq1}\mathfrak a_H^n
 \subseteq\bigcap_{r\geq1}p^r\mathbb{Z}[H]=0.
\]

Choose a set $\mathcal T$ of representatives for the cosets of $H$ in
$G$.  Then
\[
 \mathbb{Z}[G]=\bigoplus_{c\in\mathcal T}X^c\mathbb{Z}[H],
 \qquad
 L_H^n=\bigoplus_{c\in\mathcal T}X^c\mathfrak a_H^n.
\]
An element lying in every $L_H^n$ has each of its finitely many coset
components in every $\mathfrak a_H^n$.  Thus
$\bigcap_{n\geq1}L_H^n=0$.  By
Theorem~\ref{thm:alexander-powers},
$\Phi(\Delta^n)=I L_H^{n-1}\subseteq L_H^{n-1}$ for $n\geq2$, so the
augmentation filtration is separated.

Let $u$ be an idempotent. Since augmentation is an algebra homomorphism,
$\varepsilon_Q(u)\in\{0,1\}$. If $\varepsilon_Q(u)=0$, then $u\in\Delta$;
from $u^2=u$ it follows inductively that $u\in\Delta^n$ for every $n$.
Separation then forces $u=0$.
\end{proof}

The idempotent conclusion of Theorem~\ref{thm:p-primary-separation}
establishes Conjecture~6.2 of \cite{ENS23} for every Alexander quandle
$\operatorname{Alex}(G,\varphi)$ such that $(1-\varphi)G$ is a finite
$p$-group.

For a finite quandle with more than two elements, cyclic type means that
each right translation $R_y$ restricts to a single cycle on
$Q\setminus\{y\}$.  Quandles of orders at most two are trivial and will be
treated separately.

\begin{corollary}[Quandles of cyclic type]
\label{cor:cyclic-type}
Every nonzero idempotent in the integral quandle ring of a finite quandle of
cyclic type has augmentation one.
\end{corollary}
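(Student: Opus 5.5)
The strategy is to reduce to Theorem~\ref{thm:p-primary-separation}, using the known structure of quandles of cyclic type. The trivial cases come first. A quandle with at most two elements is trivial, so $e_xe_y=e_x$ and $uv=\varepsilon_Q(v)u$. If $u$ is an idempotent with $\varepsilon_Q(u)=0$, then $u=u^2=\varepsilon_Q(u)u=0$. Hence every nonzero idempotent has augmentation one, and these orders are settled directly.

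For $|Q|=n\ge3$ of cyclic type, the plan is to invoke the classification of finite quandles of cyclic type due to Tamaru, Vendramin and others. It says that such a quandle exists only when $n=p^f$ is a prime power, and that it is then isomorphic to the Alexander quandle $\operatorname{Alex}(\mathbb{F}_{q},\omega)$ with $q=p^f$, where $\omega$ is a primitive element of $\mathbb{F}_q^\times$. The reason is that $R_y$ acts as a single $(n-1)$-cycle on $Q\setminus\{y\}$, so $Q$ is doubly transitive, and doubly transitive quandles are exactly these affine ones. In this model $\varphi$ is multiplication by $\omega$. Since $|Q|\ge3$, we have $\omega\ne1$, so $1-\omega$ is a nonzero field element. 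Multiplication by $1-\omega$ is therefore bijective, and $(1-\varphi)G=\mathbb{F}_q$, which is an elementary abelian, hence finite, $p$-group.

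The hypothesis of Theorem~\ref{thm:p-primary-separation} is then satisfied, and the theorem gives the conclusion directly. Since integral quandle rings of isomorphic quandles are isomorphic as rings, and the isomorphism preserves augmentation, the result transfers to every quandle of cyclic type.

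The main obstacle is the classification step, which I would cite rather than reprove. A self-contained check would show the following. Double transitivity makes $Q$ connected, and the right multiplication group acts doubly transitively with point stabilizer containing an $(n-1)$-cycle. Known results on connected quandles with doubly transitive inner groups then make $Q$ affine over an elementary abelian group, with $\varphi$ acting as a single cycle on nonzero vectors, which is exactly multiplication by a primitive element. Even if only this affine form is extracted, it already suffices. We need $\varphi$ fixed-point-free on $G\setminus\{0\}$, which holds because $R_0$ has a single orbit on the nonzero elements, and this makes $1-\varphi$ injective and hence surjective on the finite $p$-group $G$.
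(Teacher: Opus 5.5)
Your proposal is correct and follows essentially the same route as the paper. Trivial quandles, which cover orders at most two, are handled by $u^2=\varepsilon_Q(u)u$; for $|Q|>2$, Vendramin's classification identifies $Q$ with an Alexander quandle over $\mathbb{F}_q$ in which $\varphi$ is multiplication by an element $\lambda\neq1$, so $(1-\varphi)G=G$ is a finite $p$-group and Theorem~\ref{thm:p-primary-separation} applies. The paper additionally translates Vendramin's left-quandle convention into the right-distributive one, which does not affect your argument.
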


\begin{proof}
For a trivial quandle, $u^2=\varepsilon_Q(u)u$, so every nonzero integral
idempotent has augmentation one.  This covers orders at most two.
Suppose $|Q|>2$.  Vendramin's classification
\cite[Corollary~2]{V17} identifies a quandle of cyclic type with an
Alexander quandle over a finite field.  His left-quandle convention uses
$x\triangleright y=(1-\lambda)x+\lambda y$, where $\lambda$ has order
$|Q|-1$ by~\cite[Proposition~11]{V17}.  To pass to our right-quandle
convention, reverse the arguments:
\[
 x*y=y\triangleright x=\lambda x+(1-\lambda)y.
\]
Thus $\varphi$ is multiplication by $\lambda$.
Since $\lambda\ne1$ for $|Q|>2$,
$(1-\varphi)G=(1-\lambda)G=G$, whose additive group is a finite abelian
$p$-group.  Theorem~\ref{thm:p-primary-separation} applies.
\end{proof}

Thus Conjecture~6.2 of \cite{ENS23} holds, in particular, for every finite
quandle of cyclic type.

\section{Fourier methods}
\label{sec:fourier}

Fourier coordinates turn the idempotent equations into relations among
characters.  For midpoint quandles these relations control integral
coefficients, while for core quandles they determine the spectra of right
multiplication operators.

\subsection{Fourier transform of the transported product}

Let $G$ be a finite abelian group, let
$\varphi\in\operatorname{Aut}(G)$, and let
$\widehat G=\operatorname{Hom}(G,\mathbb{C}^\times)$ be its character group.
For $u=\sum_{g\in G}a_g X^g\in\mathbb{C}[G]$, define
\[
 \widehat u(\chi)=\sum_{g\in G}a_g\chi(g)
 \qquad(\chi\in\widehat G).
\]

\begin{proposition}[Fourier form of the Alexander product]
\label{prop:fourier-formulas}
For $u,v\in\mathbb{C}[G]$,
\[
 \widehat{u\star v}(\chi)
 =\widehat u(\chi\circ\varphi)
  \widehat v(\chi\circ(1-\varphi))
 \qquad(\chi\in\widehat G).
\]
\end{proposition}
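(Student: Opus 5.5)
The plan is to reduce, by bilinearity, to the case of basis elements, and then to compute directly. Both sides of the claimed identity are $\mathbb{C}$-bilinear in $(u,v)$. The left side is bilinear because $\star$ is bilinear and $u\mapsto\widehat u(\chi)$ is linear. The right side is bilinear because evaluation of $\widehat u$ at the fixed character $\chi\circ\varphi$, and of $\widehat v$ at $\chi\circ(1-\varphi)$, is linear. So it suffices to take $u=X^g$ and $v=X^h$ with $g,h\in G$.

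First I will check that $\chi\circ\varphi$ and $\chi\circ(1-\varphi)$ lie in $\widehat G$. This holds because $\varphi$ and $1-\varphi$ are group endomorphisms of $G$, and composing a homomorphism $G\to\mathbb{C}^\times$ with an endomorphism gives a homomorphism. Hence the right-hand side is meaningful.

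Next comes the basis computation. By definition, $X^g\star X^h=\alpha(X^g)\beta(X^h)=X^{\varphi(g)}X^{(1-\varphi)h}=X^{\varphi(g)+(1-\varphi)h}$. Its transform at $\chi$ is
\[
\chi\bigl(\varphi(g)+(1-\varphi)h\bigr)=\chi(\varphi(g))\,\chi((1-\varphi)h),
\]
since $\chi$ is a homomorphism from the additive group $G$ to $\mathbb{C}^\times$. On the other side, $\widehat{X^g}(\psi)=\psi(g)$ for every character $\psi$. Therefore
\[
\widehat{X^g}(\chi\circ\varphi)\,\widehat{X^h}(\chi\circ(1-\varphi))=\chi(\varphi(g))\,\chi((1-\varphi)h),
\]
and the two sides agree.

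There is no genuine obstacle here. The only point needing care is the bilinearity reduction, and in particular noting that the evaluation points $\chi\circ\varphi$ and $\chi\circ(1-\varphi)$ do not depend on $u$ or $v$, so the right side really is bilinear.
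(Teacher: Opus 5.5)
Your proof is correct and is essentially the paper's argument. The paper applies the two identities $\widehat{uv}(\chi)=\widehat u(\chi)\widehat v(\chi)$ and $\widehat{\psi_*(u)}(\chi)=\widehat u(\chi\circ\psi)$ to $u\star v=\alpha(u)\beta(v)$, whereas you combine both into a single check on basis elements $X^g, X^h$ and then extend by bilinearity; the difference is only one of packaging.
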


\begin{proof}
For the ordinary group-ring product,
$\widehat{uv}(\chi)=\widehat u(\chi)\widehat v(\chi)$.  If
$\psi\in\operatorname{End}(G)$ and $\psi_*(X^g)=X^{\psi(g)}$, then
$\widehat{\psi_*(u)}(\chi)=\widehat u(\chi\circ\psi)$.  Apply these
identities to $u\star v=\alpha(u)\beta(v)$.
\end{proof}

We shall also use the unnormalized Parseval identity
\[
 \sum_{\chi\in\widehat G}|\widehat u(\chi)|^2
 =|G|\sum_{g\in G}|a_g|^2,
\]
which follows immediately from character orthogonality.

\subsection{Commutative and medial quandles}

A quandle is \emph{medial} if
\[
 (x*y)*(z*w)=(x*z)*(y*w)
\]
for all $x,y,z,w$.  Every Alexander quandle is medial.  Ta proved that
every medial commutative quandle is isomorphic to a midpoint quandle
\cite[Corollary~4.2]{Ta26}.  We first classify integral idempotents in the
finite midpoint case and then transfer the result to all finite medial
commutative quandles.

Assume that $G$ has odd order.  Let $[2]:G\to G$, $g\mapsto2g$, and set
$\tau=[2]^{-1}$.  Since $1-\tau=\tau$, the quandle
$M(G)=\operatorname{Alex}(G,\tau)$ has operation $x*y=(x+y)/2$ and is
commutative.  Conversely, commutativity of an Alexander operation forces
$2\varphi=1$ by setting $y=0$, so this is exactly the commutative
Alexander quandle on $G$; see also~\cite[Section~2]{ST26}.  Write
$\operatorname{Idem}(\mathcal A)=\{u\in\mathcal A:u^2=u\}$ for the
idempotent set of an algebra, including zero.

\begin{theorem}[Integral idempotents of commutative Alexander quandles]
\label{thm:midpoint-idempotents}
Let $G$ be a finite abelian group of odd order.  Then
\[
 \operatorname{Idem}(\mathbb{Z}[M(G)])
 =\{0\}\cup\{e_g:g\in G\}.
\]
\end{theorem}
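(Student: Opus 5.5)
The plan is to pass to Fourier coordinates via Proposition~\ref{prop:transported-product} and Proposition~\ref{prop:fourier-formulas}, show that every Fourier coefficient of an idempotent has absolute value $0$ or $1$, and then use the Parseval identity to force integrality constraints. The inclusion $\supseteq$ is immediate: $0$ is idempotent, and $e_ge_g=e_{g*g}=e_g$ by idempotency of the quandle operation.

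For the reverse inclusion, let $u=\sum_g a_ge_g\in\mathbb{Z}[M(G)]$ satisfy $u^2=u$, and write $f=\widehat{\Phi(u)}$ as a function on $\widehat G$. Since $\varphi=\tau$ and $1-\tau=\tau$, Proposition~\ref{prop:fourier-formulas} gives
\[
 f(\chi)=f(\chi\circ\tau)^2\qquad(\chi\in\widehat G).
\]
The map $\sigma:\chi\mapsto\chi\circ\tau$ is a permutation of the finite set $\widehat G$, because $\tau$ is an automorphism. Fix $\chi$ and let $L\ge1$ be the length of its $\sigma$-orbit. Iterating the relation $L$ times gives $f(\chi)=f(\sigma^L\chi)^{2^L}=f(\chi)^{2^L}$. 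Hence either $f(\chi)=0$ or $f(\chi)^{2^L-1}=1$. In every case $|f(\chi)|\in\{0,1\}$, so $|f(\chi)|^2\le1$.

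Now apply the unnormalized Parseval identity:
\[
 |G|\sum_{g}a_g^2=\sum_{\chi}|f(\chi)|^2\le|\widehat G|=|G|.
\]
Therefore $\sum_g a_g^2\le1$ with all $a_g\in\mathbb{Z}$. Either every $a_g=0$, so $u=0$, or exactly one coefficient is $\pm1$, so $u=\pm e_g$. The augmentation $\varepsilon_Q$ is an algebra homomorphism, so $\varepsilon_Q(u)\in\{0,1\}$. This excludes $u=-e_g$, whose augmentation is $-1$. Hence $u=e_g$.

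I do not expect a serious obstacle. The one point needing care is the root-of-unity step, which requires $\sigma$ to be a genuine permutation of $\widehat G$ so that every orbit is finite and returns to $\chi$. This holds because $G$ has odd order, which makes $[2]$ invertible and hence $\tau$ an automorphism.
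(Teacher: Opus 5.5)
Your proposal is correct and follows essentially the same route as the paper: both use the Fourier relation $c_\chi=c_{\sigma(\chi)}^2$ along finite $\sigma$-orbits to get $|c_\chi|\in\{0,1\}$, then apply Parseval to force $\sum_g a_g^2\le1$. The only cosmetic difference is that you rule out $-e_g$ via multiplicativity of the augmentation, whereas the paper computes $(-e_g)\star(-e_g)=e_g$ directly.
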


\begin{proof}
Let $u=\sum_{g\in G}a_g e_g\in\mathbb{Z}[M(G)]$ be idempotent, identify it
with its image in $\mathbb{Z}[G]\subseteq\mathbb{C}[G]$, and put
$c_\chi=\widehat u(\chi)$.  Define a permutation $\sigma$ of $\widehat G$
by $\sigma(\chi)=\chi\circ\tau$.  Since $\varphi=1-\varphi=\tau$,
Proposition~\ref{prop:fourier-formulas} shows that idempotency is equivalent
to
\[
 c_\chi=c_{\sigma(\chi)}^{\,2}
 \qquad(\chi\in\widehat G).
\]
If the $\sigma$-orbit of $\chi$ has length $r$, iteration gives
$c_\chi=c_\chi^{2^r}$.  Thus $c_\chi=0$ or $|c_\chi|=1$.

Parseval's identity now yields
\[
 |G|\sum_{g\in G}a_g^2
 =\sum_{\chi\in\widehat G}|c_\chi|^2
 \leq|G|.
\]
Hence $\sum_g a_g^2\leq1$.  If $u\ne0$, integrality forces
$u=e_g$ or $u=-e_g$ for some $g$.  Every $e_g$ is idempotent, whereas
$(-e_g)\star(-e_g)=e_g\ne-e_g$.
\end{proof}

\begin{corollary}[Finite medial commutative quandles]
\label{cor:medial-commutative-idempotents}
Let $Q$ be a finite medial commutative quandle.  Then
\[
 \operatorname{Idem}(\mathbb{Z}[Q])
 =\{0\}\cup\{e_x:x\in Q\}.
\]
\end{corollary}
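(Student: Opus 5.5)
The plan is to reduce the corollary to Theorem~\ref{thm:midpoint-idempotents} by means of Ta's midpoint description~\cite[Corollary~4.2]{Ta26}. Every medial commutative quandle is isomorphic to a midpoint quandle. In the finite case this should mean $Q\cong M(G)$ for a finite abelian group $G$ in which halving makes sense, so that $x*y=(x+y)/2$. The first step is to check that $G$ can be taken to have odd order. For $x*y=(x+y)/2$ to be a well-defined quandle operation, the map $[2]$ must be a bijection of $G$. For finite $G$ this holds exactly when $G$ has no element of order $2$, that is, when $|G|$ is odd. If Ta's statement is phrased for abelian groups uniquely $2$-divisible, finiteness of $Q=G$ immediately gives odd order. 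If it is phrased via a more general midpoint structure (for instance a $\mathbb{Z}[1/2]$-module, or an affine space with a distinguished basepoint), I would first fix a basepoint so that $Q$ becomes the group $G$ itself. I would also check that the operation matches $\operatorname{Alex}(G,\tau)$ with $\tau=[2]^{-1}$, including the orientation convention, since the paper's quandles are right-distributive.

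Next, any quandle isomorphism $f:Q\to M(G)$ induces a ring isomorphism $\mathbb{Z}[Q]\to\mathbb{Z}[M(G)]$ by $e_x\mapsto e_{f(x)}$. This map is multiplicative on basis elements, because $f(x*y)=f(x)*f(y)$, and hence multiplicative everywhere by bilinearity. It therefore restricts to a bijection of idempotent sets, sends $0$ to $0$, and carries the set of basis elements $\{e_x:x\in Q\}$ onto $\{e_g:g\in G\}$. Applying Theorem~\ref{thm:midpoint-idempotents} to $M(G)$ and pulling back along this isomorphism gives the stated equality. The theorem needs $G$ to have odd order, but it does not need $G$ to be nontrivial: when $|Q|=1$ the ring is $\mathbb{Z}$ and its idempotents are $0$ and $e_x$, which agrees with the statement.

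The main obstacle is the bookkeeping with Ta's result. I need to confirm three things: that the isomorphism type it provides really is $\operatorname{Alex}(G,\tau)$ in the paper's right-quandle convention, that a finite midpoint quandle forces $|G|$ odd, and that the empty quandle is excluded if the definition allows it. If Ta uses the left convention, the commutativity of the operation removes the issue, since reversing the arguments of a commutative operation does not change it. Everything else is the formal transport of idempotents along a ring isomorphism.
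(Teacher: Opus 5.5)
Your proposal is correct and follows the paper's proof essentially step for step: Ta's midpoint description gives $Q\cong M(G)$ with $[2]$ an automorphism of a finite abelian group $G$, so $|G|$ is odd, Theorem~\ref{thm:midpoint-idempotents} applies, and the quandle isomorphism induces a ring isomorphism carrying idempotents to idempotents and basis elements to basis elements. Your extra checks, on the orientation convention (harmless for a commutative operation) and on the one-element case, are sound; the paper does not need them.
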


\begin{proof}
By~\cite[Corollary~4.2]{Ta26}, there are an abelian group $G$ on which
multiplication by $2$ is an automorphism and a quandle isomorphism
$Q\cong M(G)$.  Since $Q$ is finite, so is $G$.  A finite abelian group on
which multiplication by $2$ is injective has odd order, so
Theorem~\ref{thm:midpoint-idempotents} applies.  Finally, a quandle
isomorphism extends $\mathbb{Z}$-linearly to an isomorphism of quandle
rings and carries basis elements to basis elements.
\end{proof}

\begin{corollary}[Commutative Alexander quandles on cyclic groups]
\label{cor:cyclic-midpoint}
For every odd integer $m\geq3$,
\[
 \operatorname{Idem}(\mathbb{Z}[M(\mathbb{Z}_m)])
 =\{0,e_0,e_1,\ldots,e_{m-1}\}.
\]
\end{corollary}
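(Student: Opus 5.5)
This corollary is the specialization of Theorem~\ref{thm:midpoint-idempotents} to $G=\mathbb{Z}_m$. The plan is to check that its hypotheses hold and then read off the basis.

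First, for odd $m\geq3$ the group $\mathbb{Z}_m$ is a finite abelian group of odd order. Hence multiplication by $2$ is invertible on it: $2$ is a unit modulo $m$, and $\tau=[2]^{-1}$ is multiplication by the inverse of $2$ modulo $m$, namely $(m+1)/2$. Thus $M(\mathbb{Z}_m)=\operatorname{Alex}(\mathbb{Z}_m,\tau)$ is defined, with operation $x*y=(x+y)/2$. This is the commutative Alexander quandle on $\mathbb{Z}_m$ described before Theorem~\ref{thm:midpoint-idempotents}.

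Theorem~\ref{thm:midpoint-idempotents} then gives $\operatorname{Idem}(\mathbb{Z}[M(\mathbb{Z}_m)])=\{0\}\cup\{e_g:g\in\mathbb{Z}_m\}$. Listing the elements of $\mathbb{Z}_m$ as $0,1,\ldots,m-1$ yields the displayed set $\{0,e_0,e_1,\ldots,e_{m-1}\}$.

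There is no real obstacle here. The only point to note is the condition $m\geq3$: it excludes $m=1$, where the group is trivial. The oddness of $m$ is exactly what makes $[2]$ an automorphism, which the theorem requires.

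Alternatively, one could appeal to Corollary~\ref{cor:medial-commutative-idempotents}, since $M(\mathbb{Z}_m)$ is medial (being Alexander) and commutative. The direct specialization is simpler.
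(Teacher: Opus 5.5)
Your proposal is correct and matches the paper's proof, which is simply the specialization of Theorem~\ref{thm:midpoint-idempotents} to $G=\mathbb{Z}_m$. Your hypothesis check, including the identification of $\tau$ as multiplication by $(m+1)/2$, is accurate. The remark about $m\geq3$ is harmless but not needed: the theorem also covers the trivial group.
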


\begin{proof}
Apply Theorem~\ref{thm:midpoint-idempotents} with $G=\mathbb{Z}_m$.
\end{proof}

If $m=2n+1$, then $[2]^{-1}$ is multiplication by $n+1$ modulo $m$, so this
is the commutative quandle considered in~\cite[Example~5.1(1)]{BE26}.
Corollary~\ref{cor:cyclic-midpoint} therefore answers the integral
instance of~\cite[Question~5.5]{BE26}: the only nonzero integral
idempotents are the basis elements.  It also extends the question's
prime-order range to every odd order; no classification over arbitrary
coefficient domains is asserted.

The augmentation-one equations for $M(\mathbb{Z}_5)$ were studied by
Gr\"obner-basis methods in~\cite[Section~4, pp.~15--17]{CCFK26}.\footnote{In
the cited version, the listed vector $(-1,0,0,0)$ gives
$q_1(-1,0,0,0)=-2$ for the first polynomial on p.~15, so the printed
solution list has a sign inconsistency.  The Fourier proof above is
independent of that calculation.}
Theorem~\ref{thm:cyclic-quotients} also shows that, if $\Delta$ denotes the
augmentation ideal of $\mathbb{Z}[M(\mathbb{Z}_m)]$, then
\[
 \Delta^r/\Delta^{r+1}\cong\mathbb{Z}/m\mathbb{Z}
 \qquad(r\geq1),
\]
because $1-[2]^{-1}=[2]^{-1}$ is an automorphism of $\mathbb{Z}_m$.

\subsection{Core quandles and Peirce spectra}

Let $\mathcal A$ be a finite-dimensional complex nonassociative algebra.
For a nonzero idempotent $u\in\mathcal A$, let
$\rho_u:\mathcal A\to\mathcal A$, $v\mapsto vu$, be right multiplication
by $u$, and define
\[
 \operatorname{Spec}_{\mathrm P}(\mathcal A)
 =\bigcup_{\substack{u\in\mathcal A\\0\ne u^2=u}}
   \operatorname{Spec}(\rho_u).
\]
This follows the convention of~\cite[Section~7]{ENS23}: right
multiplication by nonzero idempotents.

Let $G$ be a finite abelian group of odd order and put
$Q=\operatorname{Core}(G)$.  For $\chi\in\widehat G$, write
$\chi^{-1}(g)=\chi(g)^{-1}$ and $\chi^2(g)=\chi(2g)$.

\begin{theorem}[Peirce spectrum of an odd-order core]
\label{thm:core-peirce}
Let $G\ne\{0\}$ be a finite abelian group of odd order.  Then
\[
 \operatorname{Spec}_{\mathrm P}
 \bigl(\mathbb{C}[\operatorname{Core}(G)]\bigr)=\{0,1,-1\}.
\]
\end{theorem}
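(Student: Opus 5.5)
The plan is to pass to Fourier coordinates via Proposition~\ref{prop:fourier-formulas} and read off both the idempotent equations and the operator $\rho_u$ character by character. For $\operatorname{Core}(G)=\operatorname{Alex}(G,-1)$ we have $\varphi=-1$ and $1-\varphi=[2]$. The proposition then gives
\[
 \widehat{u\star v}(\chi)=\widehat u(\chi^{-1})\,\widehat v(\chi^{2}).
\]
The Fourier transform $u\mapsto\widehat u$ is a linear isomorphism $\mathbb{C}[G]\to\mathbb{C}^{\widehat G}$, so spectra may be computed after transport. Write $c_\chi=\widehat u(\chi)$ for a nonzero idempotent $u$. Idempotency becomes
\[
 c_\chi=c_{\chi^{-1}}c_{\chi^2}\qquad(\chi\in\widehat G).
\]
In these coordinates, right multiplication acts by $\widehat{\rho_u(v)}(\chi)=c_{\chi^2}\,\widehat v(\chi^{-1})$.

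Since $|G|$ is odd, $\chi\ne\chi^{-1}$ for $\chi\ne1$. Hence $\widehat G\setminus\{1\}$ splits into pairs $\{\chi,\chi^{-1}\}$, and on each pair $\rho_u$ acts by the matrix
\[
\begin{pmatrix}0&c_{\chi^2}\\ c_{\chi^{-2}}&0\end{pmatrix},
\]
whose eigenvalues are $\pm\sqrt{P_{\chi^2}}$, where $P_\psi=c_\psi c_{\psi^{-1}}$. On the trivial character $\rho_u$ acts by the scalar $c_1$, and idempotency at $\chi=1$ gives $c_1=c_1^2\in\{0,1\}$. So everything reduces to showing $P_\psi\in\{0,1\}$ for every $\psi$.

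The key step, which I expect to be the main point, is this constraint on $P$. Multiply the idempotent equation at $\chi$ by the one at $\chi^{-1}$. This gives
\[
 P_\chi=P_\chi P_{\chi^2}.
\]
Because squaring is a permutation of $\widehat G$ of odd order, the orbit $\chi,\chi^2,\chi^4,\dots$ is a finite cycle returning to $\chi$. If some $P$ on the cycle is nonzero, the relation forces the next one to equal $1$, hence nonzero, and so on around the cycle, so every $P$ on it equals $1$. Otherwise all $P$ on the cycle vanish. Thus $P_\psi\in\{0,1\}$, and every eigenvalue of $\rho_u$ lies in $\{0,1,-1\}$.

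For the reverse inclusion I will exhibit two idempotents.

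First, take $u=e_0$. Then $c\equiv1$, so the trivial character contributes eigenvalue $1$. Since $G\neq\{0\}$, at least one pair $\{\chi,\chi^{-1}\}$ exists, and it contributes eigenvalues $\pm1$.

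Second, define $u$ by $c_\chi=1$ for $\chi\ne1$ and $c_1=0$; any function on $\widehat G$ is the transform of some element of $\mathbb{C}[G]$. Odd order gives $\chi\ne1\iff(\chi^{-1}\ne1\text{ and }\chi^2\ne1)$, so the idempotent equation holds. This $u$ is nonzero, and $c_1=0$ yields the eigenvalue $0$.

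Hence the Peirce spectrum is exactly $\{0,1,-1\}$.
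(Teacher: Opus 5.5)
Your proof is correct and follows the paper's argument. It uses the same Fourier form $c_\chi=c_{\chi^{-1}}c_{\chi^2}$ and the same relation $P_\chi=P_\chi P_{\chi^2}$ propagated along squaring cycles (the paper phrases this through the support $S=\{\chi:c_\chi\neq0\}$ being permuted by squaring), together with the same antidiagonal $2\times2$ blocks. The only difference is cosmetic: for the eigenvalue $0$ your idempotent is $e_0-\omega$ with $\omega=\frac1{|G|}\sum_g e_g$, whereas the paper uses $\omega$ itself, for which $\rho_\omega(v)=\varepsilon_Q(v)\omega$ kills the augmentation-zero subspace.
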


\begin{proof}
Let $0\ne u\in\mathbb{C}[\operatorname{Core}(G)]$ be idempotent, identify
it with its image in $\mathbb{C}[G]$, and put $c_\chi=\widehat u(\chi)$.
Since $\varphi=-1$ and
$1-\varphi=2$, Proposition~\ref{prop:fourier-formulas} gives
\[
 \widehat{v\star w}(\chi)
 =\widehat v(\chi^{-1})\widehat w(\chi^2).
\]
Thus $u\star u=u$ is equivalent to
\begin{equation}
\label{eq:core-idempotent-fourier}
 c_\chi=c_{\chi^{-1}}c_{\chi^2}
 \qquad(\chi\in\widehat G).
\end{equation}

Let $S=\{\chi:c_\chi\ne0\}$.  Equation
\eqref{eq:core-idempotent-fourier} shows that $S$ is closed under inversion
and squaring.  Since squaring is a permutation of the finite group
$\widehat G$, its restriction to $S$ is a permutation, so $S$ is also
closed under inverse squaring.  Put $P_\chi=c_\chi c_{\chi^{-1}}$.
Multiplying \eqref{eq:core-idempotent-fourier} for $\chi$ and $\chi^{-1}$
gives $P_\chi=P_\chi P_{\chi^2}$.  If $\chi\in S$, then $P_\chi\ne0$ and
hence $P_{\chi^2}=1$.  Since squaring permutes $S$,
\[
 P_\eta=1
 \qquad(\eta\in S).
\]

For right multiplication by $u$,
\[
 \widehat{\rho_u(v)}(\chi)
 =c_{\chi^2}\widehat v(\chi^{-1}).
\]
Because $G$ has odd order, the inversion orbits in $\widehat G$ consist of
the trivial character and pairs $\{\chi,\chi^{-1}\}$ with $\chi\ne1$.
On the coordinates indexed by such a pair, the matrix of $\rho_u$ is
\[
 \begin{pmatrix}
  0 & c_{\chi^2}\\
  c_{\chi^{-2}} & 0
 \end{pmatrix}.
\]
If $\chi\in S$, then
$c_{\chi^2}c_{\chi^{-2}}=P_{\chi^2}=1$, so this block has eigenvalues
$1$ and $-1$.  If $\chi\notin S$, closure under inverse squaring implies
$\chi^2,\chi^{-2}\notin S$, so the block is zero.  On the
trivial-character coordinate, the eigenvalue is $c_1$, and
\eqref{eq:core-idempotent-fourier} gives $c_1=c_1^2$.  Every Peirce
eigenvalue therefore belongs to $\{0,1,-1\}$.

All three values occur.  The basis element $e_0$ is idempotent, and right
multiplication by $e_0$ sends $e_g$ to $e_{-g}$.  Since $G$ is nontrivial
and has odd order, inversion has a two-element orbit, whose permutation
matrix has eigenvalues $1$ and $-1$.  Finally, set
\[
 \omega=\frac1{|G|}\sum_{g\in G}e_g.
\]
Right translations permute $G$, so $\omega^2=\omega$.  Since multiplication
by $2$ is bijective on $G$, one also has
$\rho_\omega(v)=\varepsilon_Q(v)\omega$.  The nonzero augmentation-zero
subspace is contained in $\ker\rho_\omega$, so $0$ occurs.
\end{proof}

\begin{corollary}[Odd-order dihedral quandles]
\label{cor:dihedral-peirce}
For every odd integer $m\geq3$,
\[
 \operatorname{Spec}_{\mathrm P}
 \bigl(\mathbb{C}[\operatorname{Dih}_m]\bigr)=\{0,1,-1\}.
\]
\end{corollary}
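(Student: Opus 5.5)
This is a direct specialization of Theorem~\ref{thm:core-peirce}, and the plan is simply to check that its hypotheses hold. By the definition given at the start of the subsection on dihedral quandles, $\operatorname{Dih}_m=\operatorname{Core}(\mathbb{Z}_m)$. So I take $G=\mathbb{Z}_m$ in Theorem~\ref{thm:core-peirce}. Since $m\geq3$, the group $G$ is nontrivial, and since $m$ is odd, $G$ has odd order.

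The identification of algebras is immediate. The quandle ring $\mathbb{C}[\operatorname{Dih}_m]$ is, by definition, the same algebra as $\mathbb{C}[\operatorname{Core}(\mathbb{Z}_m)]$: it has the same basis $\{e_x\}$ and the same product $e_xe_y=e_{2y-x}$. The right Peirce spectrum is defined intrinsically from the algebra, following the convention of~\cite[Section~7]{ENS23}, so it is the same set for both descriptions. Theorem~\ref{thm:core-peirce} then gives
\[
\operatorname{Spec}_{\mathrm P}\bigl(\mathbb{C}[\operatorname{Dih}_m]\bigr)=\{0,1,-1\}.
\]

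I do not expect a real obstacle here. The only thing to check is that the dihedral quandle in~\cite{ENS23} matches our right-distributive convention $x*y=2y-x$. If the source uses the left convention $x\triangleright y=2x-y$, the resulting algebra is the opposite algebra. In that case, right multiplication in one algebra becomes left multiplication in the other, which would change the spectrum being computed. So I would confirm that the convention cited for the right Peirce spectrum in~\cite[Section~7]{ENS23} is the same one used in the definition above; the statement is phrased so that it is.
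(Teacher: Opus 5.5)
Your proposal is correct and is exactly the paper's argument: $\operatorname{Dih}_m=\operatorname{Core}(\mathbb{Z}_m)$ by definition, and $\mathbb{Z}_m$ is a nontrivial group of odd order for odd $m\geq3$, so Theorem~\ref{thm:core-peirce} applies directly. Your remark about left versus right conventions matters only when comparing with~\cite{ENS23}; it is not needed to prove the statement as posed here.
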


\begin{proof}
Since $\operatorname{Dih}_m=\operatorname{Core}(\mathbb{Z}_m)$, apply
Theorem~\ref{thm:core-peirce} with $G=\mathbb{Z}_m$.
\end{proof}

This proves~\cite[Conjecture~7.6]{ENS23}, with its convention of right
multiplication by nonzero idempotents.

\section*{Declaration of AI use}

OpenAI's ChatGPT was used during the development of this work for assistance
with literature searches, exploration and verification of mathematical arguments,
and manuscript preparation.  All mathematical claims, proofs, citations, and the
final text were independently checked and approved by the author, who takes full
responsibility for the work.

\end{document}